\documentclass[a4paper]{amsart}
\usepackage{mymacros}
\usepackage{hyperref}
\usepackage[disable]{todonotes}
\usepackage{tensor}
\usepackage{verbatim}

\hypersetup{final}


\makeatletter
\newtheorem*{rep@theorem}{\rep@title}
\newcommand{\newreptheorem}[2]{
  \newenvironment{rep#1}[1]{
    \def\rep@title{#2 \ref{##1}}
    \begin{rep@theorem}}
    {\end{rep@theorem}}}
\makeatother

\newreptheorem{ex}{Example}
\newreptheorem{prop}{Proposition}
\newreptheorem{thm}{Theorem}

\begin{document}
\title{Ricci Flow on Torus Bundles}
\author{Dmytro Yeroshkin}\thanks{The author received funding from Excellence of Science grant number 30950721, ``Symplectic Techniques''}
\email{Dmytro.Yeroshkin@ulb.be}
\address{Geometri\'e Differentielle, Universit\'e Libre de Bruxelles}
\begin{abstract}
  In this paper we compute the Ricci flow formulas for invariant metrics on prinicpal $G$-bundles compatible with the connection. Our primary focus is on torus bundles which we use to study a notion of Bakry-\'Emery Ricci flow as well as Ricci flow on circle bundles over K\"ahler-Einstein manifolds. The latter application gives us solutions to Ricci flows on Heisenberg groups and implicit solutions to Ricci flows on Berger 3-spheres and several other 3-dimensional manifolds.
\end{abstract}

\maketitle

\makeatletter
\providecommand\@dotsep{5}
\makeatother

\section{Introduction}\label{sec:Intro}

In this paper we present computations for the behavior of Ricci flow on principal $G$-bundles $\pi:E\to M$ equipped with the metric
\[
    g_E = \pi^* g_M + Q(\mu,\mu)
\]
where $\mu$ is the principal connection and $Q$ is a family of right-invariant metrics on $G$ (we associate $\fg$ to $T_p F$ to allow us to compute $Q(\mu,\mu)$). Our main focus is on applications of this to Bakry-\'Emery Ricci curvature and to circle bundles over symplectic manifolds.

In section~\ref{sec:Background} we give a short overview of a few select aspects of Ricci flow. We also give a brief introduction to Bakry-\'Emery manifolds with density, focusing on the historical progress in that area.

The focus of section~\ref{sec:Principal} is to construct the necessary formulas. In subsection~\ref{sec:General} we compute the general Ricci flow formulas in the above setting. While the full generality formula is convenient to have, our main focus will be on the case of $G=T^q$. We provide the details of this restriction in subsection~\ref{sec:Torus}. It is worth pointing out that in certain ways the torus setting might be more natural, since we are able to view $Q$ as simply a family of inner products on $\fg$ parametrized by $p\in M$. In subsection~\ref{sec:flows}, we also point out the similarity between the formulas we have and several flows found in literature.

In section~\ref{sec:BE} we examine the details of flow in the Bakry-\'Emery case motivated by Ricci flow on the torus bundles. Including evolution formulas for various quantities and some discussion of singularity formation.

In section~\ref{sec:Circle} we further restrict our attention to circle bundles, and in particular the case of circle bundles over K\"ahler-Einstein manifolds. At first glance this structure might appear exceedingly restrictive, but as we show in subsection~\ref{sec:KE-Ex} we show that a lot of 3-dimensional model geometries can be realized in this way, as well as all left-invariant metrics on $(2n+1)$-dimensional Heisenberg groups.

\section{Background}\label{sec:Background}

\subsection{Ricci Flow}

Since its Introduction by Hamilton \cite{Hamilton}, Ricci flow, despite its simple formula:
\[
    \frac{\partial}{\partial t} g = -2\Ric(g),
\]
has proven to be an important tool in differential geometry. In particular, it played a key role in Perelman's proofs of the Poincar\'e conjectrue and Thurston's Geometrization conjectrue \cite{Per1,Per2,Per3}.

In general, solving Ricci flow requires solving a system of second-order elliptic PDEs, and as such can often not be solved. This issue can be overcome by requiring additional structure, such as a large isometry group, since Ricci flow must preserve isometry. An example of requiring a large isometry group is the work done on Ricci flow on homogeneous manifolds \cite{Lauret, BPRZ}.

In this paper we use a weaker isometry group restriction of a principal $G$-bundle structure, where we express the evolution of the metric on the total space in terms of the metric on the base manifold, the principal connection and a family of right-invariant metrics $Q$ on $G$.

One property that makes Ricci flow of interest particularly in the area of positively curved manifolds is the fact that for a compact manifold the minimum value of the scalar curvature is non-decreasing, since the evolution of scalar curvature is given by
\[
    \frac{\partial}{\partial t} R = \Delta R + 2|\Ric|^2.
\]
In section~\ref{sec:BE}, we present the analogue of this formula for the Bakry-\'Emery setting ($G=T^q$ with $Q=e^{-2f/q}g_{Eucl}$ and flat connection) entirely in terms of the metric on the base manifold and $f$.

\subsection{Bakry-\'Emery Manifodls with Density}

In section~\ref{sec:BE} we will consider the application of our calculations for Ricci flow in the context of Bakry-\'Emery manifolds with density. Motivated in part by the work of Lichnerowicz \cite{Lichnerowicz70,Lichnerowicz72}, Bakry and \'Emery \cite{BE} introduced a concept of modified Ricci curvature
\[
    \Ric_f^N = \Ric + \Hess f - \frac{df\otimes df}{N-n},
\]
where $N$ has traditionally been viewed as a parameter in $(n,\infty]$. One possible motivation for this definition is that for $N>n$ an integer, $\Ric_f^N$ is precisely the $M$ component of the Ricci curvature of the warped product metric $g = g_M + e^{2f/q}g_{flat}$ on $M\times T^q$, where $q=N-n$, as we will see in section~\ref{sec:BE}.

In the case of $N=\infty$, the parameter $N$ is often omitted from the notation, giving us $\Ric_f = \Ric + \Hess f$. This case is naturally associated with Ricci flow, since $\Ric_f = \lambda g$ is precisely the definition of a gradient Ricci soliton.

More recently, several authors have considered the case of $N<n$, see for example \cite{KolesnikovMilman,Milman1,Ohta,WSecDens}. In \cite{WSecDens}, Wylie constructed an analogue of sectional curvature compatible with Bakry-\'Emery Ricci curvature in the $N=\infty$ and $N=1$ cases, a construction that can be naturally expanded to other values of $N$. One motivation for considering specifically the more general case is the discovery in \cite{WYDens} by Wylie and the author of an affine connection $\nabla^f$ on $M$ whose Ricci curvature tensor is precisely $\Ric_f^1$. This connection is given by $\nabla^f_X Y = \nabla_X Y - \frac{df(X)}{n-1}Y - \frac{df(Y)}{n-1}X$, and was further studied in \cite{KWYDens,YHolonomy}. At present, no connection is known to give $\Ric_f^N$ in general for any other value of $N$.
\section{Ricci Flow on Principal $G$-Bundles}\label{sec:Principal}

\subsection{General Structrues}\label{sec:General}

Consider the principal bundle $G\to E\to M$. We use the usual convention of a right action of $G$ on $E$. We will require the metric on $E$ to be invariant under the $G$-action, which means that our metric on the fibers will be right-invariant. On the other hand, the natural formulation for the principal connection involves the Killing fields of the group action, as such we will use a left-invariant basis of vector fields along the fibers.

Let $g$ be a metric on $M$, $Q$ be a choice of a right invariant metric on the fibers, which may depend on the point $p\in M$, and $\mu$ be a principal connection. Then we construct the metric $\tilde{g}$ on $E$ given by $\tilde{g} = \pi^* g + \tr_Q \mu\otimes\mu$. Our convention throughout the paper will be to use Greek letters for indices along the base and Latin letters for indices along the fibers.

\begin{rmk}
Normally one would take $Q$ to be a family of inner products on $\fg$, but that is too restrictive since for principal bundles the convention is to have $G$ acting on the right, which leads to left-invariant Killing fields on the fibers. The only way to have a family of inner products that makes the $G$ act by isometries is to have $Q$ extend to a bi-invariant metric on $G$. Unfortunately, Ricci flow on the total space does not preserve this. For this reason we consider right-invariant metrics on the fibers instead.
\end{rmk}

Let $U\subset M$ be a coordinate chart and $s:U\to E$ be a section. We will utilize a convenient vector field basis for $\pi^{-1}(U)$. Let $\{E_i\}$ be a basis of left-invariant vector fields on $G$, using the right $G$-action, these extend naturally to a basis of vertical vector fields on $E$. Let $\{x^\beta\}$ be the local coordinates on $U$. Let $V_\beta = s_*\partial_\beta - \mu(s_*\partial_\beta)$ be the horizontal lift of the coordinate vector field $\partial_\beta$. We extend $V_\beta = (R_g)_* V_\beta$ to allow $V_\beta$ to exist beyond our chosen section.

We will write $c\indices{_{ij}^k}$ for the components of the Lie bracket, that is $[E_i,E_j] = c\indices{_{ij}^k}E_k$. We also have $[E_i,V_\beta] = 0$ and $[V_\beta,V_\gamma] = - F(\partial_\beta,\partial_\gamma) = - F\indices{^k_{\beta\gamma}}E_k$.

Note that the fact that our metric is right-invariant, but we are using a left-invariant vector field basis, we have $D_{E_i}Q_{jk} = c\indices{_{ij}^s}Q_{sk} + c\indices{_{ik}^s}Q_{sj} = c_{ijk} + c_{ikj}$.

In terms of our chosen vector field basis, we get $\tilde{g}_{\beta\gamma} = g_{\beta\gamma}$, $\tilde{g}_{ij} = Q_{ij}$ and $\tilde{g}_{i\gamma} = 0$.

We obtain the following formulas for the Levi-Civita connection of $\tilde{g}$ on $E$, which we denote $\tilde\nabla$:
\begin{align*}
\tilde\nabla_{E_i} E_j &= \nabla^\fg_{E_i} E_j - \frac{1}{2} g^{\lambda\nu}D_\nu(Q_{ij})V_\lambda\\
\tilde\nabla_{V_\gamma} E_i = \tilde\nabla_{E_i} V_\gamma &= \frac{1}{2}Q^{kl}D_\gamma(Q_{ik})E_l + \frac{1}{2}g^{\lambda\nu}Q_{is}F\indices{^s_{\gamma\nu}} V_\lambda\\
\tilde\nabla_{V_\beta} V_\gamma &= \Gamma_{\beta\gamma}^\nu V_\nu - \frac{1}{2}F_{\beta\gamma}
\end{align*}
where $D_\nu$ denotes the derivative in the $V_\nu$ direction, $\nabla^\fg$ is the Levi-Civita connection on $G$, and $\Gamma_{\beta\gamma}^\lambda$ is the Christoffel symbol on $M$.

The corresponding Ricci curvatures are:
\begin{align*}
\tilde\Ric_{jk} &= \Ric^\fg_{jk} - \frac{1}{4}g^{\tau\nu}Q^{su}\left(D_\tau Q_{su}\right)\left(D_\nu Q_{jk}\right) + \frac{1}{4}g^{\lambda\nu}g^{\tau\xi}Q_{js}Q_{ku}F\indices{^s_{\lambda\tau}}F\indices{^u_{\nu\xi}}\\
&\qquad - \frac{1}{2}D_\lambda\left(g^{\lambda\nu}D_\nu Q_{jk}\right) - \frac{1}{2}\Gamma_{\lambda\tau}^\lambda g^{\tau\nu}D_\nu Q_{jk} + \frac{1}{2}Q^{su}g^{\lambda\nu}\left(D_\nu Q_{js}\right)\left(D_\lambda Q_{ku}\right)\\
\tilde\Ric_{j\gamma} &= \frac{1}{2}c\indices{^s_j^m}D_\gamma Q_{sm} - \frac{1}{2}c\indices{_u^{su}}D_\gamma Q_{js} + \frac{1}{4}Q_{jl}g^{\tau\nu}F\indices{^l_{\gamma\nu}} Q^{um}  D_\tau Q_{um}\\
&\qquad + \frac{1}{2} D_\lambda\left(g^{\lambda\nu}Q_{js}F\indices{^s_{\gamma\nu}}\right) + \frac{1}{2}\Gamma_{\lambda\tau}^\lambda g^{\tau\nu}Q_{js}F\indices{^s_{\gamma\nu}} - \frac{1}{2}g^{\lambda\nu}\Gamma_{\lambda\gamma}^\tau Q_{js}F\indices{^s_{\tau\nu}}\\
\tilde\Ric_{\beta\gamma} &= \Ric^M_{\beta\gamma} - \frac{1}{2}g^{\lambda\nu}Q_{su}F\indices{^s_{\beta\lambda}}F\indices{^u_{\gamma\nu}} - \frac{1}{4}\left( D_\beta Q^{us}\right)\left( D_\gamma Q_{us}\right)\\
&\qquad - \frac{1}{2}Q^{us} D_\beta D_\gamma Q_{us} + \frac{1}{2}\Gamma_{\beta\gamma}^\tau Q^{us} D_\tau Q_{us} + \frac{1}{2}c\indices{_{us}^u}F\indices{^s_{\beta\gamma}}
\end{align*}

\begin{rmk}
One might be concerned about the presence of $F\indices{^s_{\beta\gamma}}$, a skew-symmetric term, in the expression for $\tilde\Ric_{\beta\gamma}$, which is symmetric. However, this term is necessary to cancel out the skew-symmetric part of the $D_\beta D_\gamma Q_{us}$ term, as we show below (note we double the terms for convenience).
\begin{align*}
&-Q^{us} D_\beta D_\gamma Q_{us} + Q^{us}c_{ums}F\indices{^m_{\beta\gamma}} + Q^{us} D_\gamma D_\beta Q_{us} - Q^{us}c_{ums}F\indices{^m_{\gamma\beta}}\\
&\qquad\qquad = Q^{us}\left[D_{[V_\gamma,V_\beta]} Q_{us} + 2c_{ums}F\indices{^m_{\beta\gamma}}\right]\\
&\qquad\qquad = Q^{us}\left[-F\indices{^m_{\gamma\beta}}D_{E_m} Q_{us} + 2c_{ums}F\indices{^m_{\beta\gamma}}\right]\\
&\qquad\qquad = Q^{us}F\indices{^m_{\beta\gamma}}\left[c_{mus} + c_{msu} - 2c_{mus}\right]\\
&\qquad\qquad = 0
\end{align*}
\end{rmk}

\begin{prop}
Under Ricci flow of $(E,\tilde{g})$, provided it is well-defined, the triple $(g,Q,\mu)$ evolves by:
\begin{align*}
\partial_t g_{\beta\gamma} &= -2\tilde\Ric_{\beta\gamma}\\
\partial_t Q_{jk} &= -2\tilde\Ric_{jk}\\
\partial_t \mu\indices{^k_\gamma} &= -2Q^{jk}\tilde\Ric_{j\gamma}
\end{align*}
Here $\mu\indices{^k_\gamma}$ denotes the $E_k$ component of $\mu(s_*\partial_\gamma)$, where $s:U\to E$ is our local section.
\end{prop}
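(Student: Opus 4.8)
The plan is to unwind what ``Ricci flow of $(E,\tilde g)$'' means in terms of the parametrizing data $(g,Q,\mu)$, and then simply read off the three evolution equations from the block-decomposition of $\tilde\Ric$ that was computed just above the statement. The key point is that the assignment $(g,Q,\mu)\mapsto\tilde g = \pi^*g + \tr_Q\mu\otimes\mu$ is, in the fixed vector-field frame $\{E_i, V_\beta\}$, nothing but the bookkeeping $\tilde g_{\beta\gamma}=g_{\beta\gamma}$, $\tilde g_{ij}=Q_{ij}$, $\tilde g_{i\gamma}=0$; so once we know that Ricci flow starting from a $G$-invariant metric of this form stays of this form, the matrix entries of $\partial_t\tilde g = -2\tilde\Ric$ give exactly the claimed formulas for $\partial_t g_{\beta\gamma}$ and $\partial_t Q_{jk}$, while the mixed entry encodes the evolution of the connection.

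First I would argue that the class of metrics under consideration is preserved by Ricci flow. Ricci flow preserves isometries, and each element of $G$ acts on $(E,\tilde g)$ by isometry; since $G$ is connected (or by working with the identity component), invariance under the $G$-action persists, so $\tilde g(t)$ remains $G$-invariant for all $t$ in the interval of existence. A $G$-invariant metric on $E$ is determined by a metric on the base, a right-invariant fiber metric possibly varying over $M$, and a horizontal distribution; equivalently it is of the form $\pi^*g(t) + \tr_{Q(t)}\mu(t)\otimes\mu(t)$ for a uniquely determined path $(g(t),Q(t),\mu(t))$. This is the step I expect to require the most care: one must check that the decomposition is genuinely bijective onto $G$-invariant metrics (the horizontal space of $\tilde g$ — the $\tilde g$-orthogonal complement to the verticals — defines the connection $\mu$, its restriction defines $g$, and the vertical part defines $Q$), and that the path $t\mapsto(g(t),Q(t),\mu(t))$ so extracted is smooth. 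Smoothness is immediate from smoothness of $\tilde g(t)$ and the fact that each of the three pieces is obtained by smooth algebraic/differential operations (orthogonal projection, restriction).

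Next, having fixed the frame $\{E_i,V_\beta\}$ — which is time-independent, since $\{E_i\}$ comes from a left-invariant frame on $G$ and $\{V_\beta\}$ from the chosen section $s$ and coordinates $\{x^\beta\}$, none of which move — I would differentiate the three blocks of $\tilde g_{AB}(t)$ in $t$ and match against $-2\tilde\Ric_{AB}$. For the $\beta\gamma$-block, $\tilde g_{\beta\gamma}=g_{\beta\gamma}$ directly gives $\partial_t g_{\beta\gamma} = -2\tilde\Ric_{\beta\gamma}$. For the $jk$-block, $\tilde g_{jk}=Q_{jk}$ gives $\partial_t Q_{jk} = -2\tilde\Ric_{jk}$. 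For the mixed block, one uses $\tilde g(E_j, V_\gamma)=0$ together with the expansion $V_\gamma = s_*\partial_\gamma - \mu\indices{^k_\gamma}E_k$: the vanishing of the mixed entry forces $\tilde g(E_j, s_*\partial_\gamma) = \mu\indices{^k_\gamma}Q_{jk}$. The left side, $\tilde g(E_j, s_*\partial_\gamma)$, is the $j\gamma$-component of $\tilde g$ in the frame $\{E_i, s_*\partial_\beta\}$, which by the same reasoning evolves by $-2\tilde\Ric_{j\gamma}$ (the Ricci tensor is frame-independent; only its components change via the frame). Differentiating $\mu\indices{^k_\gamma}Q_{jk} = \tilde g(E_j,s_*\partial_\gamma)$ in $t$, substituting $\partial_t Q_{jk}=-2\tilde\Ric_{jk}$, and contracting with $Q^{jl}$ then isolates $\partial_t\mu\indices{^l_\gamma}$; a brief check that the $\partial_t Q$-term and a corresponding term hidden in $\tilde\Ric_{j\gamma}$ combine correctly yields $\partial_t\mu\indices{^k_\gamma} = -2Q^{jk}\tilde\Ric_{j\gamma}$.

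A remark I would include for rigor: the caveat ``provided it is well-defined'' in the statement absorbs the usual short-time-existence subtleties, and we only claim the equivalence on whatever maximal interval the flow $\tilde g(t)$ exists; conversely, a solution of the coupled system for $(g,Q,\mu)$ reassembles to a solution of Ricci flow on $E$ by reversing the same computation, so the two evolution problems are genuinely equivalent, not merely one implying the other.
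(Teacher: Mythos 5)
Your overall strategy is the same as the paper's (read off the block components of $\partial_t\tilde g=-2\tilde\Ric$ in an adapted frame, and get the $\mu$-equation by differentiating the relation that pins down the mixed block), and two of your three blocks are fine: the $jk$-block is immediate since the $E_i$ really are time-independent, and your mixed-block computation in the frame $\{E_i,s_*\partial_\beta\}$ works — the cancellation you allude to is exactly the term $\mu\indices{^k_\gamma}\tilde\Ric_{jk}$ appearing on both sides, and it is equivalent to the paper's differentiation of $\tilde g(E_j,V_\gamma)=0$. The explicit invariance-preservation step is a reasonable addition the paper leaves implicit. However, there is a genuine error in your justification of the $\beta\gamma$-equation: you claim the frame $\{E_i,V_\beta\}$ is time-independent, but $V_\beta=s_*\partial_\beta-\mu\indices{^k_\beta}E_k$ depends on the connection, which is itself evolving, so $V_\beta$ moves, with $\partial_t V_\beta=-(\partial_t\mu\indices{^k_\beta})E_k$. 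Indeed, if $V_\beta$ were static then $\mu$ could not evolve at all, contradicting the third equation you prove. Since ``$\tilde g_{\beta\gamma}=g_{\beta\gamma}$ directly gives $\partial_t g_{\beta\gamma}=-2\tilde\Ric_{\beta\gamma}$'' rests on that claim, the hardest block is exactly the one left unjustified — and it is the one the paper spends most of its proof on, working with the genuinely time-independent vectors $s_*\partial_\beta$ via $g_{\beta\gamma}=\tilde g(s_*\partial_\beta,s_*\partial_\gamma)-Q_{kl}\mu\indices{^k_\beta}\mu\indices{^l_\gamma}$ and using the already-derived evolutions of $Q$ and $\mu$ to cancel the cross terms.

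The gap is repairable with one extra observation rather than a change of approach: write $\partial_t\big[\tilde g(V_\beta,V_\gamma)\big]=(\partial_t\tilde g)(V_\beta,V_\gamma)+\tilde g(\partial_t V_\beta,V_\gamma)+\tilde g(V_\beta,\partial_t V_\gamma)$ and note that the last two terms vanish because $\partial_t V_\beta$ is vertical while $V_\gamma$ is horizontal for $\tilde g(t)$, i.e.\ $\tilde g(E_k,V_\gamma)=0$ at every time. With that line added your argument closes (and is in fact a bit shorter than the paper's expansion over $s_*\partial_\beta$); as written, though, the stated reason for the $\beta\gamma$-equation is false, so the proposal has a gap at precisely that point.
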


\begin{proof}
Our first observation is the fact that the vector fields $E_i$ are defined purely in terms of the $G$-action, and as such do not change as we evolve the metric.

Recall that $Q_{jk} = Q(E_j,E_k) = \tilde{g}_{jk}$, so $\partial_t Q_{jk} = -2\tilde\Ric_{jk}$ by the definition of Ricci flow.

Next we consider $\partial_t\mu$. In order to compute this, we must examine how the horizontal distribution changes. Consider $\partial_\gamma$ a coordinate vector field along $U\subset M$, then along $s(U)$, $V_\gamma = s_*\partial_\gamma - \mu(s_*\partial_\gamma) = s_*\partial_\gamma - \mu\indices{^k_\gamma}E_k$, the only part of this that can evolve with time is $\mu$. Now, consider
\begin{align*}
\partial_t\Big[\tilde{g}(E_j,V_\gamma)\Big] &= 0\\
&= (\partial_t \tilde{g})(E_j,V_\gamma) + \tilde{g}(E_j,\partial_t V_\gamma)\\
&= -2\tilde\Ric_{j\gamma} - Q_{jk}\partial_t \mu\indices{^k_\gamma}
\end{align*}
So, $\partial_t\mu\indices{^k_\gamma} = -2Q^{jk}\tilde\Ric_{j\gamma}$ as claimed.

Finally, observe that $g_{\beta\gamma} = g(\partial_\beta,\partial_\gamma) = \tilde{g}(s_*\partial_\beta, s_*\partial_\gamma) - Q_{kl}\mu\indices{^k_\beta}\mu\indices{^l_\gamma}$ and $\tilde\Ric(s_*\partial_\beta, s_*\partial_\gamma) = \tilde\Ric_{\beta\gamma} + \mu\indices{^k_\beta}\tilde\Ric_{k\gamma} + \mu\indices{^l_\gamma}\tilde\Ric_{\beta l} + \mu\indices{^k_\beta}\mu\indices{^l_\gamma}\tilde\Ric_{kl}$. Putting these together, we obtain:

\begin{align*}
\partial_t g_{\beta\gamma} &= \partial_t\Big[\tilde{g}(s_*\partial_\beta, s_*\partial_\gamma) - Q_{kl}\mu\indices{^k_\beta}\mu\indices{^l_\gamma}\Big]\\
&= (\partial_t \tilde{g})(s_*\partial_\beta, s_*\partial_\gamma) - \partial_t(Q_{kl}\mu\indices{^k_\beta}\mu\indices{^l_\gamma})\\
&= -2\tilde\Ric(s_*\partial_\beta, s_*\partial_\gamma) - (\partial_t Q_{kl}) \mu\indices{^k_\beta}\mu\indices{^l_\gamma} - Q_{kl} (\partial_t \mu\indices{^k_\beta})\mu\indices{^l_\gamma} - Q_{kl}\mu\indices{^k_\beta}\partial_t\mu\indices{^l_\gamma}\\
&= -2\tilde\Ric_ {\beta\gamma} - 2\mu\indices{^k_\beta}\tilde\Ric_{k\gamma} - 2\mu\indices{^l_\gamma}\tilde\Ric_{\beta l} - 2\mu\indices{^k_\beta}\mu\indices{^l_\gamma}\tilde\Ric_{kl}\\
&\qquad + 2\tilde\Ric_{kl}\mu\indices{^k_\beta}\mu\indices{^l_\gamma} + 2 Q_{kl}Q^{ks}\tilde\Ric_{s\beta}\mu\indices{^l_\gamma} + 2Q_{kl}\mu\indices{^k_\beta}Q^{ls}\tilde\Ric_{s\gamma}\\
&= -2\tilde\Ric_{\beta\gamma}
\end{align*}
\end{proof}

While we do not make further use of the fully general formulas, we make a note of one possible future application. In particular, if one is interested in searching for Einstein manifolds with high degree of symmetry, one might ask that the manifold be the total space of a principal $G$-bundle. Since $\tilde{g}_{j\gamma}=0$, the first restriction one obtains under these assumptions is that $\tilde\Ric_{j\gamma} = 0$.

\subsection{Torus Bundles}\label{sec:Torus}

For most of this paper, we will be primarily interested in the setting where $G = T^q$. The obvious changes this imposes compared to the general case are that $c\indices{_{ij}^k}\equiv 0$ and $\Ric^\fg \equiv 0$. Furthermore, since any right-invariant metric on $T^q$ is automatically bi-invariant, $\partial_{E_i} Q_{jk} = 0$, and in particular, we may view $Q:M\to \mathrm{S}_+^2(\bbR)$ (the space of positive-definite symmetric $q\times q$ matrices) and furthermore, since $c\equiv 0$, we get $D_{V_\beta} =  D_\beta = D_{s_*\partial_\beta}$. Under these simplifications, we obtain:
\begin{align*}
\tilde\Ric_{jk} &= -\frac{1}{2}\Delta Q_{jk} + \frac{1}{2}Q^{su}(\nabla^\lambda Q_{js})\nabla_\lambda Q_{ku} - \frac{1}{4}Q^{su}(\nabla^\lambda Q_{su})\nabla_\lambda Q_{jk}\\
&\qquad + \frac{1}{4}g^{\lambda\nu}g^{\tau\xi}Q_{js}Q_{ku} F\indices{^s_{\lambda\tau}}F\indices{^u_{\nu\xi}}\\
\tilde\Ric_{j\gamma} &= \frac{1}{2}\nabla^\lambda \left(Q_{js}F\indices{^s_{\gamma\lambda}}\right) + \frac{1}{4}Q_{jl}Q^{um}F\indices{^l_{\gamma\lambda}}\nabla^\lambda Q_{um}\\
\tilde\Ric_{\beta\gamma} &= \Ric^M_{\beta\gamma} - \frac{1}{2}Q^{us}\nabla_\beta\nabla_\gamma Q_{us} - \frac{1}{4}(\nabla_\beta Q^{us})\nabla_\gamma Q_{us} - \frac{1}{2}g^{\lambda\nu}Q_{su}F\indices{^s_{\beta\gamma}}F\indices{^u_{\gamma\nu}}
\end{align*}
Here $\Delta,\nabla$ denote the laplacian and covariant derivatives along $M$.

The resulting evolution formulas are:
\begin{align*}
	\frac{\partial}{\partial t} g_{\beta\gamma} &= -2\Ric_{\beta\gamma} + \frac{1}{2}\left(\nabla_\beta Q^{ks}\right)\nabla_\gamma Q_{ks} + Q^{ks}\nabla_\beta\nabla_\gamma Q_{ks} + g^{\lambda\nu}Q_{iu} F\indices{^u_{\beta\lambda}}F\indices{^i_{\gamma\nu}}\\
	\frac{\partial}{\partial t} \mu\indices{^i_\gamma} &= \nabla^\lambda F\indices{^i_{\lambda\gamma}} + \frac{1}{2}F\indices{^i_{\lambda\gamma}}Q^{su}\nabla^\lambda Q_{su} + F\indices{^l_{\lambda\gamma}}Q^{is}\nabla^\lambda Q_{ls}\\
	\frac{\partial}{\partial t} Q_{jk} &= \Delta Q_{jk} - Q^{su}\left(\nabla_\tau Q_{sj}\right)\left(\nabla^\tau Q_{uk}\right) + \frac{1}{2}Q^{su}\left(\nabla_\tau Q_{su}\right)\left(\nabla^\tau Q_{jk}\right)\\
	&\qquad - \frac{1}{2}g^{\lambda\nu}g^{\tau\xi}Q_{sj}Q_{uk}F\indices{^s_{\lambda\tau}}F\indices{^u_{\nu\xi}}
\end{align*}
These formulas can be found in \cite{LottTorii}.

We will largely be interested in what happens with a chosen section, in which case we can let $\alpha = s^*\mu$, which gives $F = d\alpha$. The evolution formulas for $(g,\alpha,Q)$ are:
\begin{align*}
	\frac{\partial}{\partial t} g_{\beta\gamma} &= -2\Ric_{\beta\gamma} + \frac{1}{2}\left(\nabla_\beta Q^{ks}\right)\nabla_\gamma Q_{ks} + Q^{ks}\nabla_\beta\nabla_\gamma Q_{ks} + g^{\mu\nu}Q_{iu} d\alpha\indices{^u_{\beta\mu}}d\alpha\indices{^i_{\gamma\nu}}\\
	\frac{\partial}{\partial t} \alpha\indices{^i_\gamma} &= \nabla^\mu d\alpha\indices{^i_{\mu\gamma}} + \frac{1}{2}d\alpha\indices{^i_{\mu\gamma}}Q^{su}\nabla^\mu Q_{su} + d\alpha\indices{^l_{\mu\gamma}}Q^{is}\nabla^\mu Q_{ls}\\
	\frac{\partial}{\partial t} Q_{jk} &= \Delta Q_{jk} - Q^{su}\left(\nabla_\mu Q_{sj}\right)\left(\nabla^\mu Q_{uk}\right) + \frac{1}{2}Q^{su}\left(\nabla_\mu Q_{su}\right)\left(\nabla^\mu Q_{jk}\right)\\
	&\qquad - \frac{1}{2}g^{\lambda\nu}g^{\mu\xi}Q_{sj}Q_{uk}d\alpha\indices{^s_{\lambda\mu}}d\alpha\indices{^u_{\nu\xi}}.
\end{align*}

\subsection{Related Flows}\label{sec:flows}

We want to raise similarities between this flow and 2 other flows found in literature: Ricci-Yang-Mills flow by Jeffrey Streets \cite{StreetsThesis,StreetsRYM} and hypersymplectic flow by Fine and Yao \cite{FYhsymp}. \todo{Add Harmonic Ricci flow}

The Ricci-Yang-Mills flow by Streets has a similar set-up as our flow, with the added restriction that $Q$ is invariant both with respect to the choice of $p\in M$ and with respect to time. The resulting flow differs by adding a coefficient of $2$ to the last term in the formula for $\partial_t g$.

The hypersymplectic flow by Fine and Yao starts from a triple of symplectic forms $\omega_1,\omega_2,\omega_3$ on a 4-manifold which satisfy $\omega_i\wedge\omega_j=A_{ij}\mu$ where $\mu$ is a chosen volume form and $A$ is a poisitive definite matrix. $g$ and $Q$ are constructed from these forms, with the constraint of $\det Q = 1$. The evolution itself is obtained by constructing a $G_2$-form $\phi$ on $M\times T^3$ and evolving $\phi$ by $\Delta\phi$. If one selects $\mu$ such that $F^i = \omega_i$, then the evolution formulas for $g$ and $Q$ have the same leading terms, and the evolution formula for $\mu$ is the negative of the one we obtained.
\todo{Expand the details of both of these evolutions for a more thorough comparison}
\section{Bakry-\'Emery Ricci Flow}\label{sec:BE}

One application of the principal bundle Ricci flow is Bakry-\'Emery Ricci flow. The setup we use for this is $Q_{ij} = e^{-2f/q}\delta_{ij}$, $F=0$ and $N = n + q$. Note that $F=0$ is equivalent to an existence of a horizontal section around each point, we will use these sections to work in local coordinates. Then we get:

\begin{align*}
\frac{\partial}{\partial t} g &= -2\Ric - 2 \Hess\ f + \frac{2}{q} df\otimes df  = -2\Ric_f^N\\
\frac{\partial}{\partial t} f &= \Delta f - |\nabla f|^2 = \Delta_f f
\end{align*}
Here $\Ric_f^N$ denotes the Bakry-\'Emery Ricci tensor, and $\Delta_f$ is the drift Laplacian $\Delta_f u = \Delta u - df(\nabla u)$. Note that these formulas were presented in \cite{LottBE}.

We observe that while this formula comes from Ricci flow in the case where $N>n$ is an integer, the evolution formulas are well defined (under standard Ricci flow assumptions, e.g. when $M$ is compact) for any $N\neq n$ since they would differ only by lower order terms.

\begin{prop}
Under the flow
\begin{align*}
\frac{\partial}{\partial t} g &= -2\Ric_f^N\\
\frac{\partial}{\partial t} f &= \Delta_f f
\end{align*}
with $N\neq n$, we have the following evolution formulas:
    \begin{align*}
	\frac{\partial}{\partial t} \Ric_{jk} &= \Delta\Ric_{jk} + 2R_{pjkq}\Ric^{pq} - 2\Ric_j^p\Ric_{pk}\\
	&\qquad - \left(\nabla^\lambda\Ric_{jk}\right)\nabla_\lambda f - \Ric_j^\lambda\nabla_k\nabla_\lambda f - \Ric_k^\lambda\nabla_j\nabla_\lambda f\\
	&\qquad +\frac{2}{N-n}\left[\Delta f \nabla_j\nabla_k f - \nabla_p\nabla_j f\nabla^p\nabla_k f - R_{pjk}^\lambda\nabla_\lambda f\nabla^p f\right]\\
	\frac{\partial}{\partial t}\bar{\Ric}_{jk} &= \Delta\bar{\Ric}_{jk} + 2R_{pjkq}\bar{\Ric}^{pq} -2\bar{\Ric}_j^p\bar{\Ric}_{pk} - \left(\nabla^p \bar{\Ric}_{jk}\right)\nabla_p f\\
	&\qquad - \frac{1}{N-n}\Big[\bar{\Ric}_{jp}\nabla^p f\nabla_k f + \bar{\Ric}_{kp}\nabla^p f\nabla_j f - 2\left(\Delta f - |\nabla f|^2\right) \nabla_j\nabla_k f \Big]\\&= \Delta_f\bar{\Ric}_{jk} + 2R_{pjkq}\bar{\Ric}^{pq} -2\bar{\Ric}_j^p\bar{\Ric}_{pk}\\
	&\qquad - \frac{1}{N-n}\Big[\bar{\Ric}_{jp}\nabla^p f\nabla_k f + \bar{\Ric}_{kp}\nabla^p f\nabla_j f - 2\left(\Delta f - |\nabla f|^2\right) \nabla_j\nabla_k f \Big]\\
	\frac{\partial}{\partial t} \bar{S} &= \Delta \bar S + 2|\bar\Ric|^2 - 2\langle \bar{\Ric}, \Hess f\rangle - df(\nabla\bar S) + \frac{2}{N-n}\Delta f\Big[\Delta f - |\nabla f|^2\Big]\\
	&= \Delta_f \bar{S} + 2|\bar\Ric|^2 - 2\langle\bar\Ric,\Hess f\rangle + \frac{2}{N-n}\Delta f\Big[\Delta f - |\nabla f|^2\Big]
\end{align*}
where $\bar\Ric = \Ric_f^N$ and $\bar{S} = g^{ij}\bar\Ric_{ij}$ is the Bakry-\'Emery scalar curvature.

Furthermore, if we consider $\tilde{S}_k = \bar{S} + \Delta_f f - k|\nabla f|^2 = \bar{S} + \Delta f - (k+1)|\nabla f|^2$, we get

\[
\frac{\partial}{\partial t}\tilde{S}_k = \Delta_f\tilde{S}_k + 2|\bar\Ric|^2 + \frac{2}{N-n}\Big[\Delta f - |\nabla f|^2\Big]^2 + 2k|\Hess f|^2 + \frac{2k}{N-n}|\nabla f|^4
\]
\end{prop}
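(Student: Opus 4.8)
The plan is to derive the three evolution equations in order — first $\partial_t\Ric_{jk}$, then $\partial_t\bar\Ric_{jk}$, then $\partial_t\bar S$ — and finally assemble $\partial_t\tilde S_k$ from the pieces, since $\tilde S_k$ is a linear combination of $\bar S$, $\Delta_f f$ and $|\nabla f|^2$. For the first equation I would start from the standard variation formula for the Ricci tensor under $\partial_t g = -2h$ with $h = \bar\Ric = \Ric + \Hess f - \tfrac{1}{N-n}df\otimes df$. The $-2\Ric$ part of $h$ contributes exactly the usual Ricci-flow reaction-diffusion terms $\Delta\Ric_{jk} + 2R_{pjkq}\Ric^{pq} - 2\Ric_j^p\Ric_{pk}$ via Hamilton's computation (using the DeTurck/Lichnerowicz Laplacian identity); the remaining perturbation $-2(\Hess f - \tfrac{1}{N-n}df\otimes df)$ gets fed through the linearized Ricci operator $DRic(h)_{jk} = -\tfrac12\Delta_L h_{jk} - \tfrac12\nabla_j\nabla_k(\mathrm{tr}\,h) + \tfrac12(\nabla_j(\mathrm{div}\,h)_k + \nabla_k(\mathrm{div}\,h)_j)$, together with the contribution coming from differentiating $f$ in time via $\partial_t f = \Delta_f f$. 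Here I would use the contracted second Bianchi identity and commute derivatives freely, picking up curvature terms $R_{pjk}^{\lambda}\nabla_\lambda f\nabla^p f$ exactly where the $df\otimes df$ term is differentiated twice. Collecting everything should reproduce the stated formula, with the drift term $-(\nabla^\lambda\Ric_{jk})\nabla_\lambda f$ and the cross terms $-\Ric_j^\lambda\nabla_k\nabla_\lambda f - \Ric_k^\lambda\nabla_j\nabla_\lambda f$ emerging from the Hessian-perturbation part.

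For $\partial_t\bar\Ric_{jk}$ I would not redo everything from scratch but instead write $\bar\Ric = \Ric + \Hess f - \tfrac{1}{N-n}df\otimes df$ and differentiate term by term: use the just-derived $\partial_t\Ric_{jk}$, then compute $\partial_t(\nabla_j\nabla_k f)$ and $\partial_t(\nabla_j f\,\nabla_k f)$. For the Hessian one needs $\partial_t\Gamma$, which is the standard $-g^{\lambda\mu}(\nabla_j\bar\Ric_{k\mu} + \nabla_k\bar\Ric_{j\mu} - \nabla_\mu\bar\Ric_{jk})$, plus the Hessian of $\partial_t f = \Delta_f f$; the latter produces a Bochner-type term $\Delta(\Delta_f f)$ and, after commuting through Bochner's formula, a Ricci-coupling term that combines with the curvature terms from $\partial_t\Ric$ to yield the clean $2R_{pjkq}\bar\Ric^{pq}$ and $-2\bar\Ric_j^p\bar\Ric_{pk}$ reaction terms. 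The various $\Hess f$-with-$\Ric$ cross terms should recombine into $\bar\Ric_j^p\bar\Ric_{pk}$ up to exactly the lower-order $\tfrac{1}{N-n}$ correction displayed. The second form of the equation is then just $\partial_t\bar\Ric_{jk} = \Delta_f\bar\Ric_{jk} + \cdots$ by absorbing the $-(\nabla^p\bar\Ric_{jk})\nabla_p f$ drift term into $\Delta_f$.

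For $\partial_t\bar S$ I would take $\bar S = g^{jk}\bar\Ric_{jk}$ and use $\partial_t g^{jk} = 2\bar\Ric^{jk}$, so $\partial_t\bar S = 2\bar\Ric^{jk}\bar\Ric_{jk} + g^{jk}\partial_t\bar\Ric_{jk}$; tracing the second-form $\bar\Ric$-evolution and using $g^{jk}\Delta_f\bar\Ric_{jk} = \Delta_f\bar S$ (the drift Laplacian commutes with the trace since $\nabla g = 0$) plus $g^{jk}R_{pjkq}\bar\Ric^{pq} = \Ric^{pq}\bar\Ric_{pq}$, one collects $2|\bar\Ric|^2 - 2\langle\bar\Ric,\Hess f\rangle$ and the $\tfrac{2}{N-n}\Delta f(\Delta f - |\nabla f|^2)$ term. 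Finally, for $\tilde S_k = \bar S + \Delta f - (k+1)|\nabla f|^2$ I would compute $\partial_t(\Delta f)$ and $\partial_t|\nabla f|^2$ separately — these require $\partial_t g^{jk}$ again, $\partial_t(\partial_t f)$-type commutators, and Bochner's formula $\Delta|\nabla f|^2 = 2|\Hess f|^2 + 2\langle\nabla f,\nabla\Delta f\rangle + 2\Ric(\nabla f,\nabla f)$ — and add them to $\partial_t\bar S$. The key cancellations: the $-2\langle\bar\Ric,\Hess f\rangle$ term from $\partial_t\bar S$ is killed by terms arising from $\partial_t(\Delta f)$ (where $\Ric(\nabla f,\cdot)$ and $\Hess f$ contractions against $\partial_t g$ appear), and the coefficient $(k+1)$ is arranged precisely so the residual $|\Hess f|^2$ and $|\nabla f|^4$ terms come out as $2k|\Hess f|^2$ and $\tfrac{2k}{N-n}|\nabla f|^4$; the $\tfrac{2}{N-n}(\Delta f - |\nabla f|^2)^2$ term is the completed square from the $\tfrac{2}{N-n}\Delta f(\Delta f - |\nabla f|^2)$ in $\partial_t\bar S$ together with the $\tfrac{2}{N-n}$-contributions from $\partial_t(\Delta f - |\nabla f|^2)$.

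The main obstacle I anticipate is bookkeeping the $\tfrac{1}{N-n}$-weighted terms correctly: every place where $df\otimes df$ is differentiated — in time via $\partial_t f$, and in space via the linearized Ricci operator and the $\partial_t\Gamma$ appearing in $\partial_t\Hess f$ — produces a cluster of third-order-looking terms that must cancel against each other, leaving only the displayed second-order expressions. Getting the signs and the curvature terms $R_{pjk}^\lambda\nabla_\lambda f\nabla^p f$ right through the Bianchi/commutator steps, and verifying that the $\langle\bar\Ric,\Hess f\rangle$ terms cancel in the $\tilde S_k$ computation for the stated coefficient $(k+1)$, is where I expect to spend most of the effort; the leading-order heat-operator structure and the $2|\bar\Ric|^2$ reaction term are routine by comparison with Hamilton's original scalar-curvature and Ricci evolution computations.
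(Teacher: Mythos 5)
The paper states this proposition without any written proof, treating it as a direct (if lengthy) computation, so there is no argument of the author's to compare against; your variational strategy --- linearize $\Ric$ under $\partial_t g=-2\bar\Ric$, differentiate $\Hess f$ and $df\otimes df$ in time using $\partial_t\Gamma$ and $\partial_t f=\Delta_f f$, trace to get $\bar S$, and finish $\tilde S_k$ with Bochner's formula --- is exactly the route such a proof must take, and it does succeed. One concrete slip, though: $\Ric$ depends on $g$ alone, so the first displayed formula is purely the linearization $D\Ric[-2\Ric_f^N]$; there is no ``contribution coming from differentiating $f$ in time'' in that equation, and inserting one would both spoil it and double-count once you pass to $\partial_t\bar\Ric$, where $\partial_t f$ genuinely enters through $\partial_t(\Hess f)$ and $\partial_t(df\otimes df)$. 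The cleanest bookkeeping for the Hessian piece is $2\Hess f=\mathcal{L}_{\nabla f}g$ together with diffeomorphism equivariance, $D\Ric[\mathcal{L}_X g]=\mathcal{L}_X\Ric$, which produces the drift term and the two cross terms in one line; the $df\otimes df$ piece fed through the standard variation formula yields the $\frac{2}{N-n}$ bracket only after the commutation $\Delta\nabla_j f=\nabla_j\Delta f+\Ric_j^p\nabla_p f$ cancels the stray $\nabla\Delta f$ and $\Ric(\nabla f,\cdot)\,df$ terms --- a cancellation you should exhibit rather than assume.

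More substantively, the entire content of the proposition is the exact coefficients, and your write-up defers every decisive cancellation with ``should reproduce,'' ``should recombine,'' ``is arranged precisely so.'' In particular, you never verify that the $\frac{1}{N-n}$-cluster in $\partial_t\bar\Ric_{jk}$ comes out exactly as stated, nor that with the weight $(k+1)$ in $\tilde S_k=\bar S+\Delta f-(k+1)|\nabla f|^2$ the residue is precisely $2k|\Hess f|^2+\frac{2k}{N-n}|\nabla f|^4$ plus the completed square $\frac{2}{N-n}\bigl[\Delta f-|\nabla f|^2\bigr]^2$. As a plan it is the right plan; as a proof it is not yet one: you need to carry out $\partial_t(\nabla_j\nabla_k f)$ explicitly (via $\partial_t\Gamma^\lambda_{jk}=-g^{\lambda\mu}\bigl(\nabla_j\bar\Ric_{k\mu}+\nabla_k\bar\Ric_{j\mu}-\nabla_\mu\bar\Ric_{jk}\bigr)$ plus the Hessian of $\Delta_f f$), together with $\partial_t(\nabla_j f\,\nabla_k f)$, $\partial_t(\Delta f)$ and $\partial_t|\nabla f|^2$, and display the cancellations that produce the stated formulas.
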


\begin{prop}
Let $(M^n,g,f,N)$ be a compact manifold with density and $N\in(n,\infty]$. Furthermore, suppose that Bakry-\'Emery Ricci flow exists for $t\in[0,T)$. Then, $\inf \tilde{S}_k$ is non-decreasing for any $k\geq 0$.
\end{prop}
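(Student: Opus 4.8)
The plan is to read off the conclusion from the evolution equation for $\tilde{S}_k$ proved in the previous proposition, combined with the scalar minimum principle on a compact manifold. The key point is a sign check: when $N\in(n,\infty]$ and $k\geq 0$, every term on the right-hand side of
\[
\frac{\partial}{\partial t}\tilde{S}_k = \Delta_f\tilde{S}_k + 2|\bar\Ric|^2 + \frac{2}{N-n}\Big[\Delta f - |\nabla f|^2\Big]^2 + 2k|\Hess f|^2 + \frac{2k}{N-n}|\nabla f|^4
\]
beyond the leading $\Delta_f\tilde{S}_k$ is non-negative: $|\bar\Ric|^2$, $|\Hess f|^2$, $|\nabla f|^4$ are squares, $\tfrac{1}{N-n}\geq 0$ because $N>n$ (with the convention $\tfrac{1}{N-n}=0$ when $N=\infty$, in which case those two terms simply drop out), and $k\geq 0$. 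Hence $\tilde{S}_k$ is a supersolution of the drift heat equation, $\partial_t\tilde{S}_k\geq \Delta_f\tilde{S}_k$.

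Next I would apply Hamilton's trick. Since $M$ is compact and Ricci flow exists on $[0,T)$, the families $g(t)$ and $f(t)$, and therefore $\tilde{S}_k(\cdot,t)$, are smooth on $M\times[0,T)$. Put $\sigma(t)=\min_{x\in M}\tilde{S}_k(x,t)$; this is locally Lipschitz in $t$, and for a.e. $t$ one has $\sigma'(t)=\partial_t\tilde{S}_k(x_t,t)$ for any point $x_t$ attaining the minimum at time $t$. At such a spatial minimum $\nabla\tilde{S}_k(x_t,t)=0$ and $\Delta\tilde{S}_k(x_t,t)\geq 0$, so $\Delta_f\tilde{S}_k(x_t,t)=\Delta\tilde{S}_k(x_t,t)-df(\nabla\tilde{S}_k)(x_t,t)=\Delta\tilde{S}_k(x_t,t)\geq 0$. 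Feeding this into the evolution inequality gives $\sigma'(t)\geq 0$ for a.e. $t$, and local Lipschitzness then yields that $\sigma$ is non-decreasing on $[0,T)$, which is exactly the assertion.

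The only genuinely delicate point is that the operator $\Delta_f$ and the metric are time-dependent, so one must invoke the minimum principle in the form valid for a time-dependent second-order operator with no zeroth-order term; this is standard in the Ricci flow literature. Everything else is immediate: the non-negativity of the reaction terms and the sign of $\Delta_f$ at an interior spatial minimum. As an optional remark, the argument also shows the monotonicity is strict whenever $\bar\Ric$ (or $\Hess f$, when $k>0$) fails to vanish identically along the set where the minimum is attained.
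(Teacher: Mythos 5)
Your argument is correct and is exactly what the paper relies on: the paper states this proposition without a separate proof, treating it as an immediate consequence of the evolution formula for $\tilde{S}_k$ from the preceding proposition, whose reaction terms are all non-negative when $N\in(n,\infty]$ and $k\geq 0$, combined with the standard minimum principle (Hamilton's trick) for the time-dependent drift heat operator on a compact manifold. Your sign check and the observation that the drift term vanishes at a spatial minimum are precisely the points that make this immediate.
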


\begin{rmk}
	For $N>n$ an integer, $\tilde{S}_0$ is precisely the scalar curvature of the bundle
\end{rmk}

\begin{thm}
	Let $(M^n,g,f,N)$ be a manifold with density whose Bakry-\'Emery Ricci flow exists on $[0,T)$. If $|\nabla f|^2\leq k$ is bounded, then, we have the following bounds on $|\nabla f_t|^2$:
	\[
	|\nabla f_t|^2 \leq \begin{cases}
		k & N\in(n,\infty]\\
		\dfrac{(n-N)k}{(n-N)-2k t} & N\in(-\infty,n)
	\end{cases}
	\]
\end{thm}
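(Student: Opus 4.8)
The plan is to derive an evolution inequality for $u = |\nabla f|^2$ under Bakry-\'Emery Ricci flow and then apply a comparison (ODE) argument. First I would compute $\partial_t |\nabla f|^2$. Since $|\nabla f|^2 = g^{ij}\nabla_i f\nabla_j f$, the time derivative picks up three contributions: the variation of the inverse metric $\partial_t g^{ij} = 2(\Ric_f^N)^{ij}$, and two copies of the variation of $\nabla f$ coming from $\partial_t f = \Delta_f f$. The metric-variation term produces $2(\Ric_f^N)(\nabla f,\nabla f) = 2\Ric(\nabla f,\nabla f) + 2\Hess f(\nabla f,\nabla f) - \tfrac{2}{N-n}|\nabla f|^4$, and the $f$-variation terms give $2\nabla f\cdot\nabla(\Delta_f f)$. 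Using the Bochner formula $\Delta|\nabla f|^2 = 2|\Hess f|^2 + 2\nabla f\cdot\nabla\Delta f + 2\Ric(\nabla f,\nabla f)$ together with the commutator identity $\nabla f\cdot\nabla\langle\nabla f,\nabla f\rangle = 2\Hess f(\nabla f,\nabla f)$, I expect the Ricci terms and the rough Hessian terms to organize into $\Delta_f u = \Delta u - \nabla f\cdot\nabla u$ plus $2|\Hess f|^2$ plus the manifestly nonpositive (for $N>n$) or controlled (for $N<n$) term $-\tfrac{2}{N-n}|\nabla f|^4$. Concretely I anticipate an identity of the shape
\begin{equation*}
\frac{\partial}{\partial t}|\nabla f|^2 = \Delta_f|\nabla f|^2 - 2|\Hess f|^2 + \frac{2}{n-N}|\nabla f|^4,
\end{equation*}
where I should double-check the sign and coefficient of the Hessian term against the $\tilde S_k$ evolution in the preceding proposition, which already exhibits the combination $\Delta_f f - k|\nabla f|^2$ and the term $\tfrac{2}{N-n}|\nabla f|^4$; consistency there is a good internal check.

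Once the evolution identity is in hand, I would drop the good term $-2|\Hess f|^2\le 0$ to obtain the differential inequality
\begin{equation*}
\left(\frac{\partial}{\partial t} - \Delta_f\right)|\nabla f|^2 \le \frac{2}{n-N}|\nabla f|^4.
\end{equation*}
For $N\in(n,\infty]$ the right-hand side is $\le 0$ (interpreting $\tfrac{2}{n-N}|\nabla f|^4 = 0$ when $N=\infty$), so $|\nabla f|^2$ is a subsolution of the drift heat equation; by the maximum principle its supremum is nonincreasing, giving $|\nabla f_t|^2\le k$. For $N\in(-\infty,n)$ set $c = \tfrac{2}{n-N}>0$; then $\phi(t) := \sup_M |\nabla f_t|^2$ satisfies, in the barrier/viscosity sense, $\phi'(t)\le c\,\phi(t)^2$ with $\phi(0)\le k$, and comparison with the ODE solution $\psi' = c\psi^2$, $\psi(0)=k$, namely $\psi(t) = \dfrac{k}{1-ckt} = \dfrac{(n-N)k}{(n-N)-2kt}$, yields exactly the stated bound as long as the denominator stays positive.

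The main obstacle is bookkeeping rather than conceptual: carefully tracking the $\tfrac{1}{N-n}$ terms through the variation of $g^{ij}$ and through $\nabla(\Delta_f f)$, and being sure the Hessian term comes out with a favorable sign (and is not cancelled). A secondary technical point is justifying the maximum-principle step when $M$ is noncompact, since the theorem statement imposes no compactness; the clean route is to note the theorem only claims an upper bound, invoke the weak maximum principle for complete manifolds under the standing assumption that the flow (and hence relevant curvature/derivative bounds) exists on $[0,T)$, or simply state the argument under the boundedness hypothesis $|\nabla f|^2\le k$ already assumed, which is precisely what makes $\phi(t)$ finite and the ODE comparison legitimate. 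If one prefers to avoid maximum-principle subtleties entirely, an alternative is to run the comparison at an interior spatial maximum using a standard cutoff/Omori-Yau argument, but I expect the direct approach to suffice given the hypotheses.
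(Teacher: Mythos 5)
Your proposal is correct and is evidently the intended argument: the paper omits a proof of this theorem, but your evolution identity $\partial_t|\nabla f|^2 = \Delta_f|\nabla f|^2 - 2|\Hess f|^2 + \tfrac{2}{n-N}|\nabla f|^4$ checks out directly (and is consistent with the $2k|\Hess f|^2$ and $\tfrac{2k}{N-n}|\nabla f|^4$ terms in the paper's $\tilde{S}_k$ evolution), after which dropping the Hessian term and comparing with the ODE $\psi' = \tfrac{2}{n-N}\psi^2$, $\psi(0)=k$, yields exactly the stated bounds in both the $N\in(n,\infty]$ and $N\in(-\infty,n)$ cases. Your caveat about the maximum principle in the noncompact setting is the right point to flag, since this theorem, unlike the preceding proposition, does not explicitly assume compactness.
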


\begin{cor}
	Let $(M^n,g,f,N)$ be a manifold with density and $N\in(n,\infty]$ whose Bakry-\'Emery Ricci flow exists on $[0,T)$. If both $f$ and $|\nabla f|$ are bounded at $t=0$, then either the flow is immortal or it has a purely metric singularity.
\end{cor}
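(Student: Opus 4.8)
The plan is to combine the two conserved/monotone quantities already available: the gradient bound from the Theorem (which, for $N\in(n,\infty]$, gives $|\nabla f_t|^2\le k$ for all $t$ where the flow exists) and a bound on $f_t$ itself obtained from the evolution equation $\partial_t f = \Delta_f f$. Since the gradient bound is uniform in $t$ on $[0,T)$, the only way the flow can fail to be immortal is through a singularity of the metric, i.e. one not caused by blow-up of $f$ or $\nabla f$; making this precise is the content of the corollary.

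First I would control $f$. From $\partial_t f = \Delta_f f = \Delta f - df(\nabla f)$ we get, writing $\partial_t \tfrac12 f^2 = f\,\Delta_f f = \Delta_f(\tfrac12 f^2) - |\nabla f|^2$, so $\tfrac12 f^2$ is a subsolution of the drift heat equation. On a compact manifold the maximum principle then gives $\sup_M f_t^2 \le \sup_M f_0^2$, hence $|f_t|$ stays bounded by its initial bound for all $t\in[0,T)$; alternatively one applies the maximum principle directly to $f$ and to $-f$ to bound $f_t$ between $\inf f_0$ and $\sup f_0$. (One must note that $\Delta_f$ here is the drift Laplacian with respect to the evolving metric $g_t$, but the maximum-principle argument is insensitive to that, since at an interior spatial maximum $\nabla f=0$ and $\Delta_f f=\Delta f\le 0$.) Combined with the Theorem's bound $|\nabla f_t|^2\le k$, we conclude that both $f_t$ and $|\nabla f_t|$ remain uniformly bounded on $[0,T)$.

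Next I would invoke the structural fact, from Section~\ref{sec:BE}, that Bakry-\'Emery Ricci flow on $(M,g,f)$ with $N=n+q$ an integer is the same as Ricci flow on the total space $E=M\times T^q$ with metric $\tilde g_t = g_t + e^{2f_t/q} g_{\mathrm{flat}}$ and flat connection. Boundedness of $f_t$ means the fiber metric $e^{2f_t/q}$ and its inverse stay bounded above and below; boundedness of $|\nabla f_t|$ controls the first derivatives of the fiber metric, hence the full metric $\tilde g_t$ and $\tilde g_t^{-1}$ are uniformly equivalent to a fixed background and have controlled first derivatives in the $M$-directions. Therefore $\tilde g_t$ degenerates as $t\to T$ only if $g_t$ itself does. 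By Hamilton's long-time existence criterion, if the Ricci flow $\tilde g_t$ on the compact manifold $E$ does not extend past $T<\infty$, then $\sup_E |\widetilde{\mathrm{Rm}}|\to\infty$ as $t\to T$; the curvature formulas of Section~\ref{sec:Principal} (with $c\equiv 0$, $F=0$) express $\widetilde{\mathrm{Rm}}$ in terms of $\mathrm{Rm}^M$, $f$, and up to two derivatives of $f$. Using the gradient and sup bounds on $f$ one still needs to handle $\Hess f$; here I would either absorb $\Hess f$-blow-up into the statement (a ``purely metric'' singularity is one where $\sup_M|\mathrm{Rm}^M|\to\infty$, equivalently the curvature of $E$ blows up through its $M$-part) or use the evolution equation for $f$ and parabolic Schauder/Bernstein estimates to bound $\Hess f$ in terms of curvature. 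The cleanest formulation is: if $T<\infty$, then since $f_t,|\nabla f_t|$ are bounded, the only source of blow-up in $\widetilde{\mathrm{Rm}}$ is $\mathrm{Rm}^M$ together with second derivatives of $f$, and both of these are intrinsic to the metric side of the flow; thus the singularity is ``purely metric'' and the non-integer $N$ case follows because the evolution equations differ only in lower-order terms, so the same a priori bounds on $f_t$ and $|\nabla f_t|$ and the same curvature bookkeeping apply.

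The main obstacle I anticipate is making the phrase ``purely metric singularity'' precise enough that the argument closes without circularity: one wants to say the metric $g_t$ on $M$ (or equivalently $\tilde g_t$ on $E$) is the thing that degenerates, not the density. The delicate point is the second derivatives of $f$: $\Hess f$ appears in $\Ric_f^N$ hence directly drives $\partial_t g$, so a priori $\Hess f$ could blow up even while $f$ and $\nabla f$ stay bounded. I expect one resolves this by the standard Bernstein-type argument — differentiating $\partial_t f = \Delta_f f$ and using the bounds already in hand plus Shi-type interior estimates for the ambient Ricci flow — to show $|\Hess f_t|$ is controlled by $\sup|\mathrm{Rm}^M|$, so that blow-up of $\widetilde{\mathrm{Rm}}$ forces blow-up of $\mathrm{Rm}^M$; that is precisely the dichotomy ``immortal or purely metric singularity.''
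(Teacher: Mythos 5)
Your core argument is correct and is essentially the intended one: the paper states this corollary without a written proof, but the evident argument is exactly your first two steps — the preceding Theorem preserves the gradient bound $|\nabla f_t|^2\le k$ for $N\in(n,\infty]$, and the maximum principle applied to $\partial_t f=\Delta_f f$ (at a spatial maximum $\nabla f=0$ and $\Delta f\le 0$, so $\sup f$ is non-increasing and $\inf f$ non-decreasing) keeps $f_t$ uniformly bounded on $[0,T)$. Hence neither $f$ nor $|\nabla f|$ can degenerate, so a finite-time singularity can only come from the metric; that is the (informal) meaning of ``purely metric singularity'' here. Two remarks. First, your maximum-principle step uses compactness (or a suitable noncompact maximum principle); the corollary does not state it, but compactness is the standing assumption of this section (the previous proposition assumes it, and the paper invokes ``standard Ricci flow assumptions, e.g.\ when $M$ is compact''), so you should flag it as a hypothesis rather than silently use it. Second, the latter half of your proposal — passing to the torus bundle, Hamilton's long-time existence criterion, and Bernstein/Shi-type estimates to control $\Hess f$ by curvature — goes beyond what the statement requires: since ``purely metric singularity'' is not given a formal definition, the corollary is only claiming that the density data $f,\ |\nabla f|$ stay bounded, so the blow-up is attributable to $g$; your concern about $\Hess f$ is a legitimate refinement (it is what one would need to show that curvature of $g$ itself must blow up, as for List/harmonic-Ricci-type flows), but it is supplementary rather than a gap in the intended argument, and in the non-integer-$N$ case the bundle picture is unavailable anyway, so the short intrinsic argument is the one to keep.
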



\todo{An example with variable $N$}

\section{Ricci Flow on Circle Bundles}\label{sec:Circle}

The next special case we consider is when $G=S^1$ (alternatively we can look at the universal cover $G=\bbR$). For convenience, we normalize $Q = \begin{pmatrix}e^{-2f}\end{pmatrix}$, similar to the Bakry-\'Emery setting.

\begin{align*}
\frac{\partial}{\partial t} g_{\beta\gamma} &= -2\Ric_{\beta\gamma} - 2\nabla_\beta f\nabla_\gamma f + e^{-2f}g^{\lambda\nu} F_{\beta\lambda}F_{\gamma\nu}\\
\frac{\partial}{\partial t} f &= \Delta_f f + \frac{1}{4} e^{-2f}g^{\beta\gamma}g^{\lambda\nu}F_{\beta\lambda}F_{\gamma\nu}\\
\frac{\partial}{\partial t} \mu_\beta &= \nabla^\lambda F_{\lambda\beta} - 3\left(\nabla^\lambda f\right) F_{\lambda\beta}\\
\frac{\partial}{\partial t} \mu &= -d^\star F - 3\imath_{\nabla f} F\\
\frac{\partial}{\partial t} F &= -\Delta^H F - 3d\left(\imath_{\nabla f} F\right)
\end{align*}
Here we view $\mu,F$ as real valued forms by considering only the coefficient of the chosen generator of $\fg$. The other observation we make is that the $g^{\beta\gamma}g^{\lambda\nu}F_{\beta\lambda}F_{\gamma\nu}$ term in the evolution of $f$ is either $2|F|^2$ or $\frac{1}{2}|F|^2$ depending on the convention one uses.

One case of particular interest is when $(M,g_0,\omega)$ is a K\"ahler-Einstein manifold with the curvature of the circle bundle $F = \omega$, and $f\equiv C$. For convenience, we will allow $g$ to be a scalar multiple of $g_0$.

\begin{thm}\label{thm:KE}
Let $(M,g_0,\omega)$ be a K\"ahler-Einstein manifold of complex dimension $n$ with $\Ric(g_0) = \lambda g_0$. Construct a principal circle bundle over $(M,u_0g_0)$ with curvature $F=\omega$ and constant fiber volume ($f\equiv C$). Then the Ricci flow on the bundle is given by $g(t) = u(t)g_0$, $f_t = f(t)$ and the principal connection is preserved. Furthermore, $u(t),f(t)$ are the solutions to the following system of ODEs:
\begin{align*}
u'(t) &= -2\lambda + \frac{e^{-2f}}{u} & u(0) &= u_0\\
f'(t) &= \frac{ne^{-2f}}{2u^2} & f(0) &= C
\end{align*}
Furthermore, if $\lambda=0$, we get
\begin{align*}
u(t) &= e^{-2C/(n+2)}u_0^{n/(n+2)}\left[(n+2)t + e^{2C}u_0^2\right]^{1/(n+2)}\\
f(t) &= \frac{n}{2(n+2)}\log\left((n+2)t + e^{2C}u_0^2\right) + \frac{2C}{n+2} - \frac{n}{n+2}\log(u_0)
\end{align*}
If $\lambda\neq 0$, then
\[
\Psi(t) = e^{2f}\left(1 - \frac{n+1}{2\lambda u e^{2f}}\right)^{n/(n+1)}
\]
is constant.
\end{thm}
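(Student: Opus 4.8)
The plan is to specialize the circle-bundle evolution equations already derived to the K\"ahler--Einstein ansatz and check that this ansatz is preserved, reducing everything to a pair of ODEs. First I would substitute $g=u(t)g_0$, $f\equiv f(t)$ (spatially constant), and $F=\omega$ into the three evolution formulas
\[
\frac{\partial}{\partial t} g_{\beta\gamma} = -2\Ric_{\beta\gamma} - 2\nabla_\beta f\nabla_\gamma f + e^{-2f}g^{\lambda\nu}F_{\beta\lambda}F_{\gamma\nu},\qquad
\frac{\partial}{\partial t} f = \Delta_f f + \tfrac14 e^{-2f}g^{\beta\gamma}g^{\lambda\nu}F_{\beta\lambda}F_{\gamma\nu},\qquad
\frac{\partial}{\partial t} F = -\Delta^H F - 3d(\imath_{\nabla f}F).
\]
Since $f$ is constant in space, $\nabla f=0$, so the $F$-equation reduces to $\partial_t F=-\Delta^H F$; but $F=\omega$ is harmonic with respect to $g_0$ (it is parallel on a K\"ahler manifold), and conformal rescaling $g=u g_0$ does not move a middle-degree-closed-and-coclosed $2$-form out of the harmonic space in the relevant sense needed here — in fact $\Delta^H\omega=0$ — so $F$ stays equal to $\omega$ and the connection is preserved. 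Likewise $\Delta_f f=0$, so the $f$-equation becomes $f'=\tfrac14 e^{-2f}|\omega|_g^2$. The key algebraic input is that for a K\"ahler form, $g_0^{\beta\gamma}g_0^{\lambda\nu}\omega_{\beta\lambda}\omega_{\gamma\nu}=2n$ (the real dimension is $2n$, and $|\omega|^2$ counts accordingly), and that the ``square'' tensor $g_0^{\lambda\nu}\omega_{\beta\lambda}\omega_{\gamma\nu}=(g_0)_{\beta\gamma}$ because $\omega$ is compatible with $g_0$ and the complex structure is an isometry. Under $g=ug_0$ these scale as $|\omega|_g^2=u^{-2}\cdot 2n$ and $g^{\lambda\nu}\omega_{\beta\lambda}\omega_{\gamma\nu}=u^{-1}(g_0)_{\beta\gamma}$.

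Feeding these in, the metric equation becomes $u'(t)(g_0)_{\beta\gamma} = -2\lambda (g_0)_{\beta\gamma} + e^{-2f}u^{-1}(g_0)_{\beta\gamma}$, which is consistent with the ansatz (both sides proportional to $g_0$) and gives $u'=-2\lambda + e^{-2f}/u$; the $f$-equation gives $f'=\tfrac14 e^{-2f}\cdot u^{-2}\cdot 2n = \tfrac{n e^{-2f}}{2u^2}$. This establishes the ODE system and the initial conditions. For $\lambda=0$ I would solve directly: from the two equations, $\tfrac{d}{dt}(e^{2f})=2f'e^{2f}=n/u^2$ and $\tfrac{d}{dt}(u^2 e^{2f}) = 2uu'e^{2f}+u^2\cdot 2f'e^{2f} = 2e^{-2f}e^{2f}\cdot\big(\text{from }u u' = e^{-2f}\big)\ldots$ — more cleanly, set $w=u^2e^{2f}$; then $w' = 2uu'e^{2f}+u^2(e^{2f})'$, and using $uu'=e^{-2f}$ (since $\lambda=0$) and $(e^{2f})'=n/u^2$ gives $w'=2+n=n+2$, so $w=(n+2)t+e^{2C}u_0^2$. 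Then $e^{2f}=w/u^2$ combined with $(e^{2f})'=n/u^2$ yields a separable equation for $u$ whose solution matches the stated closed form; $f$ follows by back-substitution.

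For $\lambda\neq 0$ the system is not elementarily integrable, so instead I would verify that the claimed quantity $\Psi = e^{2f}\big(1 - \tfrac{n+1}{2\lambda u e^{2f}}\big)^{n/(n+1)}$ is a conserved quantity by differentiating: compute $\tfrac{d}{dt}\log\Psi = 2f' + \tfrac{n}{n+1}\cdot\tfrac{d}{dt}\log\big(1-\tfrac{n+1}{2\lambda}u^{-1}e^{-2f}\big)$, expand the second term using $u'$ and $f'$, and check the result collapses to zero. The main obstacle is precisely this last verification: it requires a somewhat delicate algebraic cancellation, and one must be careful that the combination $1-\tfrac{n+1}{2\lambda u e^{2f}}$ has the right sign so the fractional power is defined along the flow — I would note that $u u' = e^{-2f} - 2\lambda u$ shows the factor controls whether $u$ is increasing or decreasing, tying the conserved quantity to the qualitative behavior. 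A cleaner route for this step is to introduce $v = u e^{2f}$ and compute $v' = u'e^{2f} + 2uf'e^{2f} = e^{-2f}e^{2f}/1\cdot(\ldots)$; in fact $v' = (-2\lambda u + e^{-2f})e^{2f} + 2u\cdot\tfrac{ne^{-2f}}{2u^2}\cdot e^{2f} = -2\lambda v + 1 + n/u$, and also $(e^{2f})' = 2f'e^{2f} = \tfrac{n e^{-2f}}{u^2}e^{2f} = n/(u v)\cdot$\,\ldots; relating these two should exhibit $\Psi$ as a first integral after eliminating $t$, which is the argument I would write out in full.
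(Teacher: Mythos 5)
Your handling of the ODE system and of the explicit $\lambda=0$ solution is correct and is essentially the computation the paper leaves implicit (``easy to verify by hand''): spatial constancy of $f$ kills the $\nabla f$ terms, $\Ric(ug_0)=\lambda g_0$ by scale invariance of Ricci, and the K\"ahler identities $g_0^{\lambda\nu}\omega_{\beta\lambda}\omega_{\gamma\nu}=(g_0)_{\beta\gamma}$ and $g_0^{\beta\gamma}g_0^{\lambda\nu}\omega_{\beta\lambda}\omega_{\gamma\nu}=2n$ give exactly the stated right-hand sides; your substitution $w=u^2e^{2f}$, $w'=n+2$ is a clean way to get the closed form. Two small remarks: preservation of the connection is most directly read off from the $\mu$-equation $\partial_t\mu=-d^\star F-3\imath_{\nabla f}F=0$ rather than from the $F$-equation (curvature being preserved does not by itself preserve $\mu$), and since $u(t)$ is spatially constant you have $d^\star_{ug_0}=u^{-1}d^\star_{g_0}$ on $2$-forms, so no appeal to ``middle-degree'' conformal behaviour is needed. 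Where you genuinely diverge from the paper is the conserved quantity: you propose to verify $\Psi$ by direct differentiation, whereas the paper derives it by choosing coefficients $C_k$ so that $\frac{d}{dt}\sum_k C_k u^{1-k}e^{-2kf}$ telescopes, the series summing (when convergent) to a multiple of $1/\Psi$; your route is more elementary, avoids the convergence caveat, and fully suffices for the statement as written, but you should close it rather than leave it as ``should collapse.'' It does: with $P=1-\frac{n+1}{2\lambda u e^{2f}}$ one computes
\[
P'=\frac{n+1}{2\lambda u e^{2f}}\left(\frac{u'}{u}+2f'\right)
=\frac{n+1}{2\lambda u e^{2f}}\left(-\frac{2\lambda}{u}+\frac{(n+1)e^{-2f}}{u^2}\right)
=-\frac{(n+1)e^{-2f}}{u^2}\,P,
\]
using $\frac{n+1}{2\lambda u e^{2f}}=1-P$ in the last step, whence $\frac{d}{dt}\log\Psi=2f'+\frac{n}{n+1}\frac{P'}{P}=\frac{ne^{-2f}}{u^2}-\frac{ne^{-2f}}{u^2}=0$; the same identity shows $P$ cannot change sign along the flow, which settles the well-definedness of the fractional power that you rightly flagged.
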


\begin{proof}
The above formulas are easy to verify by hand.

The construction of $\Psi(t)$ stems from the observation that
\begin{align*}
\frac{\partial}{\partial t} \left(u^{1-k}e^{-2kf}\right) &= (1-k)\left(-2\lambda + \frac{e^{-2f}}{u}\right)u^{-k}e^{-2kf} + -2k\left(\frac{ne^{-2f}}{2u^2}\right)u^{1-k}e^{-2kf}\\
&= -2\lambda(1-k)u^{-k}e^{-2kf} + \left((1-k) - kn\right)u^{-(k+1)}e^{-2(k+1)f}
\end{align*}
This allows one to choose coefficients of $\displaystyle\sum_{k=1}^\infty C_k u^{1-k}e^{-2kf}$ such that the derivative telescopes. If the resulting series converges, the limit is a constant multiple of $\dfrac{1}{\Psi(t)}$.
\end{proof}

It is perhaps surprising how many examples are constructed in the fashion of Theorem~\ref{thm:KE}. In Section~\ref{sec:KE-Ex} we will construct examples in the setting of several Lie groups: $SU(2),\ SL_2(\bbR)$ and one example of a solvable Lie group in dimension 3, as well as the arbitrary dimensional Heisenberg groups.

We note that the evolution formulas in Theorem~\ref{thm:KE} are equivalent to those in \cite[Section~3.4]{Lauret} in the 3-dimensional case. In that paper Lauret considers 3-dimensional homogeneous manifolds, and in particular in that section he examines the case where the eigenvalues of the Ricci operator are
\[
\frac{a^2}{2},\ b-\frac{a^2}{2},\ b-\frac{a^2}{2}.
\]
In our construction, the eigenvalues of the Ricci operator are
\[
\frac{ne^{-2f}}{2u^2},\ \underbrace{\frac{\lambda}{u} - \frac{e^{-2f}}{2u^2},\ldots,\ \frac{\lambda}{u} - \frac{e^{-2f}}{2u^2}}_{2n\ \text{copies}}
\]
This provides the immediate equivalence under the identification of
\[
a = \frac{e^{-f}}{u},\ b = \frac{\lambda}{u}\\
\]
Lauret shows that the evolution of $a$ and $b$ is given by the ODE system:
\begin{align*}
a' &= \left(2b-\frac{3}{2}a^2\right)a\\
b' &= \left(2b-a^2\right)b
\end{align*}
We note that Lauret's formulation can handle the case of $a=0$, while our formulation cannot. This is easy to remedy, as the $a=0$ case can be constructed by taking $g_0$ to be Einstein with $\Ric(g_0) = \lambda g_0$, $g=u(t)g_0$, $f\equiv C$ and $F=0$. This still maintains $b = \frac{\lambda}{u}$, and we can easily solve the resulting ODE: $u(t) = u_0 -  2\lambda t$.

The other observation we make is that in Lauret's notation we can construct an analogue of $\Psi$:
\[
	\Lambda(t) = \lambda^4\Psi(t)^2 = \frac{b^4}{a^4} - \frac{b^3}{a^2}
\]
which is also constant, as can be verified directly from Lauret's formulas.

\subsection{Examples}\label{sec:KE-Ex}

In this section we demonstrate some examples of Ricci flow computations that can be handled using the results of Theorem~\ref{thm:KE}

\begin{ex}[Berger 3-Spheres]\label{ex:S3Berger}
Recall that a Berger 3-sphere is $SU(2)$ equipped with a metric that is left-invariant and right $SO(2)$-invariant. Equivalently, this is the left-invariant metric given by $Q(I,I) = \lambda_1^2$, $Q(J,J) = Q(K,K) = \lambda_2^2$ and $Q(I,J) = Q(I,K) = Q(J,K) = 0$, in this notation we have $SO(2) = \{\exp(tI)|t\in\bbR\}$ acting isometrically on the right.

Recall that
\[
	SU(2) = \left\{\begin{pmatrix}
		z & w\\
		-\bar{w} & \bar{z}
	\end{pmatrix}:z,w\in\bbC,\ |z|^2+|w|^2 = 1\right\}
\]
and our chosen $SO(2)$ corresponds to the matrices with $w=0$.

Quotienting out by the $SO(2)$ action, we get a map $\phi:SU(2)\to S^2$, which can be extended to a map $\tilde\phi:\bbC^2\to\bbR\times\bbC\cong\bbR^3$ given by $\tilde\phi(z,w) = (|z|^2-|w|^2, 2\bar{z}w)$. Note that the fact that our metric is left-invariant ensures that $S^2$ has a homogeneous metric, and looking at the image of the identity, we have $I\mapsto(0,0),\ J\mapsto(0,2),\ K\mapsto(0,2i)$, which means that our $S^2$ ends up with $g = (\lambda_2^2/4)g_{unit}$, where $g_{unit}$ is the standard metric on the round unit sphere. We also get $F(\phi_*J, \phi_*K) = -2$, so reversing the orientation, we have $F = \frac{1}{2}\omega_0$, where $\omega_0$ is the canonical volume form on the unit $S^2$.

To have $F = \omega$, we need to set $g_0 = \frac{1}{2}g_{unit}$, which gives us $\lambda = 2$, and our initial conditions then are $u(0) = \lambda_2^2/2$ and $e^{-2f(0)} = \lambda_1^2$. This gives us the following ODE system:
\begin{align*}
u'(t) &= -4 + \frac{e^{-2f}}{u} & u(0) &= \frac{\lambda_2^2}{2}\\
f'(t) &= \frac{e^{-2f}}{2u^2} & f(0) &= -\log(\lambda_1)
\end{align*}
In the special case of $\lambda_1=\lambda_2$ we can solve this system as $u(t) = -2t+\frac{\lambda_2^2}{2}$ and $e^{-2f(t)} = -4t+\lambda_2^2$, which is the correct flow for the round 3-sphere.

If $\lambda_1\neq\lambda_2$, we get an implicit solution:
\[
	e^{2f}\left(1-\frac{1}{2ue^{2f}}\right)^{1/2} = \frac{\sqrt{\lambda_2^2-\lambda_1^2}}{\lambda_1^2\lambda_2}
\]
We plot some examples of this flow using $x=e^{-2f}, y=u$ in Figure~\ref{fig:Berger}, where the legend provides the initial values of $\lambda_1,\lambda_2$. In all cases the flow predictibly collapses to $g=0$, with asymptotic behavior of a round sphere.

\begin{figure}\label{fig:Berger}
\includegraphics[width=\textwidth]{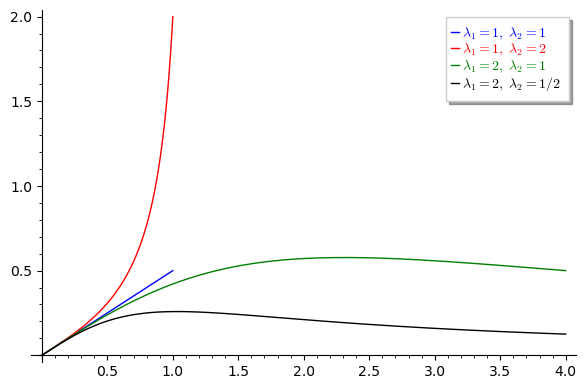}

\caption{Ricci flow on Berger spheres}
\end{figure}
\end{ex}

\begin{ex}[$SL_2(\bbR)$]\label{ex:SL2R}
The analogues of Berger metrics on $SL_2(\bbR)$ are the left-invariant, right $SO(2)$-invariant metrics. These metrics are of the form $Q(U,U)=\lambda_1^2$, $Q(V,V)=Q(W,W)=\lambda_2^2$ and $Q(U,V)=Q(U,W)=Q(V,W)=0$, where
\[
U = \begin{pmatrix}
	0 & 1\\
	-1 & 0
\end{pmatrix}\qquad
V = \begin{pmatrix}
	0 & 1\\
	1 & 0
\end{pmatrix}\qquad
W = \begin{pmatrix}
	1 & 0\\
	0 & -1
\end{pmatrix}.
\]

Under right-action of $SO(2)$, the quotient manifold can be naturally identified with the upper half-plane: $(x,y)\sim \begin{pmatrix}
y & x/y\\
0 & 1/y
\end{pmatrix}$. Furthermore, the induced metric on the quotient manifold is
\[
g = \begin{pmatrix}
\frac{\lambda_2^2}{4y^4} & 0\\
0 & \frac{\lambda_2^2}{y^2}
\end{pmatrix}.
\]
For convenience, we utilize $g_0 = \begin{pmatrix}
	\frac{1}{4y^3} & 0\\
	0 & \frac{1}{y^2}
\end{pmatrix}$, since $2 = F(V,W) = 4y^3\omega(\partial_x,\partial_y)$, which gives us the correct $\omega$ for $g_0$. We also get $\lambda = -4$, $u(0) = \lambda_2^2$ and $f(0) = -\log(\lambda_1)$.

The resulting implicit solution is
\[
e^{2f}\left(1+\frac{1}{4ue^{2f}}\right)^{1/2} = \frac{\sqrt{\lambda_1^2+4\lambda_2^2}}{2\lambda_1^2\lambda_2}.
\]
\end{ex}

\begin{ex}[Heisenberg Group]\label{ex:Heisenberg}
Let $H_{2n+1}$ be the $(2n+1)$-dimensional Heisenberg group, and $\fh_{2n+1}$ the corresponding Lie algebra.

We consider the standard model for the Lie group:
\begin{align*}
H_{2n+1} &= \left\{
\begin{pmatrix}
	1 & x_1 & \cdots & x_n & z\\
	0 & 1 & \cdots & 0 & y_1\\
	\vdots & & \ddots & \vdots & \vdots\\
	0 & 0 & \cdots & 1 & y_n\\
	0 & 0 & \cdots & 0 & 1
\end{pmatrix} :
x_i,y_i,z\in\bbR
\right\}\\
\fh_{2n+1} &= \left\{\begin{pmatrix}
	0 & u_1 & \cdots & u_n & w\\
	0 & 0 & \cdots & 0 & v_1\\
	\vdots & & \ddots & \vdots & \vdots\\
	0 & 0 & \cdots & 0 & v_n\\
	0 & 0 & \cdots & 0 & 0
\end{pmatrix} :
u_i,v_i,w\in\bbR
\right\}
\end{align*}
Consider the basis $X_i,Y_i,Z$ on $\fh_{2n+1}$ with $X_i$ corresponding to $u_i=1$, $Y_i$ to $v_i=1$ and $Z$ to $w=1$ (all other entries zero). Then, the left-invariant extensions in coordinates are
\[
X_i = \partial_{x_i},\qquad Y_i = \partial_{y_i} + x_i\partial_z,\qquad Z = \partial_z
\]
The only non-zero brackets are $[X_i,Y_i] = Z$.

Using a correct choice of basis, we can ensure that the inner product $Q$ satisfies
\begin{align*}
Q(X_i,X_j) = Q(Y_i,Y_j) &= \delta_{ij}\\
Q(X_i,Y_j) = Q(X_i,Z) = Q(Y_j,Z) &= 0\\
Q(Z,Z) &= c^2
\end{align*}

As a left-invariant metric on $H_{2n+1}$ we can write this in coordinates as
\begin{align*}
	g(\partial_{x_i},\partial_{x_j}) &= \delta_{ij}\\
	g(\partial_{x_i},\partial_{y_j}) = g(\partial_{x_i},\partial_z) &= 0\\
	g(\partial_{y_i},\partial_{y_j}) &= \delta_{ij} + x_i x_j c^2\\
	g(\partial_{y_i},\partial_z) &= -c^2 x_i\\
	g(\partial_z,\partial_z) &= c^2
\end{align*}

Letting $z$ be the coordinate along the fiber, in our notation this gives us
\begin{align*}
	g &= g_{Eucl} & f &= -\log c\\
	\mu &= -\sum_{i=1}^n x_i dy_i + dz & F &= -\sum_{i=1}^n dx_i\wedge dy_i
\end{align*}

Since this is a K\"ahler Ricci-flat example, we know the explicit solution:

\begin{align*}
	u(t) &= c^{2/(n+2)}\left[(n+2)t+\frac{1}{c^2}\right]^{1/(n+2)}\\
	f(t) &= \frac{n}{2(n+2)}\log\left((n+2)t + \frac{1}{c^2}\right) - \frac{2\log c}{n+2}
\end{align*}

One interesting observation is that if we continuously adjust our coordinates to have a metric on $\fh_{2n+1}$ with the same conditions as we constructed above:
\begin{align*}
	g(\partial_{x_i},\partial_{x_j}) &= \delta_{ij}\\
	g(\partial_{x_i},\partial_{y_j}) = g(\partial_{x_i},\partial_z) &= 0\\
	g(\partial_{y_i},\partial_{y_j}) &= \delta_{ij} + x_i x_j c(t)^2\\
	g(\partial_{y_i},\partial_z) &= -c(t)^2 x_i\\
	g(\partial_z,\partial_z) &= c(t)^2
\end{align*}
we get
\[
c(t) = \frac{1}{\sqrt{u^2 e^{2f}}} = \frac{1}{\sqrt{(n+2)t + c^{-2}}} = \frac{c}{\sqrt{1+(n+2)c^2 t}}
\]
\end{ex}


The last example we want to present here is one of the $Sol$ manifolds. In particular, we will be looking at the Lie group corresponding to the solvable Lie algebra of type III under Bianchi classification \cite{BianchiIta,BianchiEng}. This Lie algebra is the direct sum of the unique non-abelian 2-dimensional Lie algebra and $\bbR$.

\begin{ex}[Sol III]\label{ex:Sol3}
Let $\fg$ be the type III Lie algebra. Recall that the Lie bracket of $\fg$ is given by $[X,Y] = Y,\ [Z,-]=0$.

Let $Q$ be any inner product on $\fg$. We select our $X,Y,Z$ basis as follows:
\begin{enumerate}
	\item Take $Z$ to be a unit vector if $\fz(\fg)$ (unique up to sign).
	\item Take $\tilde{Y}$ to be the unique unit vector in $[\fg,\fg]$ which satisfies $Q(\tilde{Y},Z)\geq 0$.
	\item Take $X$ to be the unique vector satisfying $[X,\tilde{Y}] = \tilde{Y},\ Q(X,\tilde{Y})=Q(X,Z)=0$.
	\item Let $k = \dfrac{|X|}{\sqrt{1-Q(\tilde{Y},Z)^2}}$, and take $Y=k\tilde{Y}$.
\end{enumerate}
This gives us
\begin{align*}
Q(X,X) &= c>0 & Q(X,Y) &= 0 & Q(X,Z) &= 0\\
Q(Y,Z) &= k\cdot Q(\tilde{Y},Z) = a\geq 0 & Q(Y,Y) &= c+a^2 & Q(Z,Z) &= 1
\end{align*}

We now realize this algebra as a subalgebra of $\gl_3(\bbR)$ as
\[
	X = \begin{pmatrix}
		0 & 0 & 0\\
		0 & 1 & 0\\
		0 & 0 & 0
	\end{pmatrix}
	\qquad
	Y = \begin{pmatrix}
		0 & 0 & 0\\
		0 & 0 & 1\\
		0 & 0 & 0
	\end{pmatrix}
	\qquad
	Z = \begin{pmatrix}
		1 & 0 & 0\\
		0 & 0 & 0\\
		0 & 0 & 0
	\end{pmatrix}
\]
The corresponding Lie group is
\[
G = \left\{
	\begin{pmatrix}
		e^z & 0 & 0\\
		0 & x & y\\
		0 & 0 & 1
	\end{pmatrix}
	: z,y\in\bbR,\ x>0
	\right\}
\]
Where in local coordinates $X=x\partial_x,\ Y = x\partial_y,\ Z=\partial_z$.

The resulting metric in these coordinates is
\[
	g = \begin{pmatrix}
		\frac{c}{x^2} & 0 & 0\\
		0 & \frac{c}{x^2} + \frac{a^2}{x^2} & \frac{a}{x}\\
		0 & \frac{a}{x} & 1
	\end{pmatrix}
\]
Note that if $a=0$, $G$ becomes the direct product of $\bbH^2$ and $\bbR$, so we will assume $a>0$.

We consider the projection $G\to\bbH^2$ given by quotienting by $\exp(t\cdot Z)$. Under this projection we have $f=0$ $g = cg_{Hyp}$ and $F = a\omega_{Hyp}$. For convenience, we take $g_0 = ag_{Hyp}$, which gives us $\lambda = -1/a$ and $u(0) = c/a$.

The resulting implicit solution to the Ricci flow is
\[
	e^{4f}+\frac{ae^{2f}}{u} = 1+\frac{a^2}{c}
\]
\end{ex}

One question we are interested in is determining which of the other solvable 3-dimensional Lie groups can be realized as a bundle in the style of Theorem~\ref{thm:KE}.

\bibliographystyle{amsalpha}
\bibliography{References}
\end{document}